\newtheorem{thm}{Theorem}[section]
\newtheorem{claim}[thm]{Claim}
\newtheorem{lem}[thm]{Lemma}
\newtheorem{lemma}[thm]{Lemma}
\newtheorem{conjecture}[thm]{Conjecture}
\newtheorem{proposition}[thm]{Proposition}
\newtheorem{problem}[thm]{Problem}
\newtheorem{fact}[thm]{Fact}
\newtheorem{corollary}[thm]{Corollary}
\newtheorem{question}[thm]{Question}
\DeclareMathOperator{\pp}{pp}
\DeclareMathOperator{\diam}{diam}
\newcommand{\eps}{\varepsilon}
\newcommand{\dist}{\text{dist}}
\newcommand{\mM}{\mathcal{M}}
\newcommand{\mP}{\mathcal{P}}
\newcommand{\mR}{\mathcal{R}}
\newcommand{\mE}{\mathcal{E}}
\DeclareMathOperator{\tp}{tp}
\newcommand{\ignore}[1]{}
\title{On the maximum degree of path-pairable planar graphs}
 \author{Ant\'onio Gir\~ao \thanks{Department of Pure Mathematics and Mathematical Statistics, University of Cambridge, Cambridge, UK; \texttt{A.Girao@dpmms.cam.ac.uk}} 
\and
G\'abor M\'esz\'aros \thanks{Department of Mathematical Sciences, The University of Memphis, Memphis, Tennessee;
\texttt{gmszaros@memphis.edu}} 
\and
Kamil Popielarz \thanks{Department of Mathematical Sciences, The University of Memphis, Memphis, Tennessee; \texttt{kamil.popielarz@gmail.com}} 
\and 
Richard Snyder \thanks{Department of Mathematical Sciences, The University of Memphis, Memphis, Tennessee;
\texttt{rjsnyder23@gmail.com }} }
\begin{document}
\maketitle
\singlespace
\begin{abstract}
A graph is \emph{path-pairable} if for any pairing of its vertices there exist edge-disjoint paths joining the vertices in each pair. We investigate the behaviour of the maximum degree in path-pairable planar graphs. We show that any $n$-vertex path-pairable planar graph must contain a vertex of degree linear in $n$.
\end{abstract}
\onehalfspace
\section{Introduction}

We are interested in {\it path-pairability}, a graph theoretical notion that emerged from a practical networking problem. This notion was introduced by Csaba, Faudree, Gy\'arf\'as, Lehel, and Shelp \cite{CS}, and further studied by Faudree, Gy\'arf\'as, and Lehel \cite{mpp,F,pp}, and by Kubicka, Kubicki and Lehel \cite{grid}. Given a fixed integer $k$ and a simple undirected graph $G$ on at least $2k$ vertices, we say that $G$  is {\it $k$-path-pairable} if, for any pair of disjoint sets of distinct vertices $\{x_1,\dots,x_k\}$ and $\{y_1,\dots,y_k\}$ of $G$, there exist $k$ edge-disjoint paths $P_1,P_2,\dots,P_k$, such that $P_i$ is a path from $x_i$ to $y_i$, $1\leq i\leq k$. The concept of $k$-path pairability is closely related to the notions of $k$-\textit{linkedness} and $k$-\textit{weak-linkedness}. A graph is said to be $k$-(weakly)linked if for any choice $\{s_1,\ldots,s_k, t_1,\ldots ,t_k \}$ of $2k$ vertices (not necessarily distinct) there are vertex(edge) internally disjoint paths $P_1,\ldots, P_k$ with $P_i$ joining $s_i$ to $t_i$, $1\leq i\leq k$. While any $k$-(weakly)linked graph is $(2k-1)$-vertex connected ($k$-edge connected), the same need not hold for $k$-path-pairable graphs. Observe that the \textit{stars} $S_{2k}$ ($k\geq 1$) are $k$-path-pairable and yet have very low edge density and edge connectivity. 
On the other hand, a result of  Bollob\'as and Thomason \cite{BollobasThomason} shows that a  $2k$-connected graph with a lower bound on the edge density implies that $G$ is $k$-linked. A similar theorem of Hirata, Kubota and Saito \cite{HIRATASAITOKUBOTA} 
states that a $(2k+1)$-edge connected graph is $(k+2)$-weakly-linked for $k \geq 2$.

A $k$-path-pairable graph on $2k$ vertices is simply said to be {\it path-pairable}.
Some of the most central questions in the study of path-pairable graphs concern determining the behaviour of their maximum degree. It is fairly easy to construct path-pairable graphs on $n$ vertices ($n$ even) with maximum degree linear in $n$. For example, complete graphs $K_{2n}$ and complete bipartite graphs $K_{m,n}$ are path-pairable for all choices of $m,n\in\mathbb{N}$ with $m+n$ even, $m\neq 2,n\neq 2$.

It is slightly more challenging to construct an infinite family of path-pairable graphs where the maximum degree grows sublinearly. We 
shall now describe such a family. Let $K_t$ be the complete graph on $t$ vertices and let $K_t^q$ be constructed from $K_t$ by attaching $q -1$ leaves to the original vertices of $K_t$. This family was introduced by Csaba, Faudree, Gy\'arf\'as, and Lehel~\cite{CS}, who also proved that $K_t^q$ is path-pairable as long as $t\cdot q$ is even and $q\leq \left\lfloor\frac{t}{3+2\sqrt{2}}\right\rfloor$. The bound on $q$ has been recently improved to $\approx \frac{1}{3}t$ \cite{tpc}. 
Observe that $n = |V(K_t^q)| = t\cdot q$ and $\Delta(K_t^q) = t + q - 2 = O(\sqrt{n})$ when $q = \Omega(t)$. 
Additional path-pairable constructions with maximum degree $c \sqrt{n}$ can be found in \cite{grid} and \cite{gm_pp}.

The following result due to Faudree \cite{F}, shows that the maximum degree of a path-pairable graph has to grow with the order of the graph.
\begin{thm}\label{deltaLowerBound}
If $G$ is path-pairable on $n$ vertices with maximum degree $\Delta$, then $n\leq 2\Delta^\Delta$.
\end{thm}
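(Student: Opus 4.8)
The plan is to exploit the single global constraint that path-pairability imposes. If $G$ is path-pairable and $\{x_1,\dots,x_{n/2}\},\{y_1,\dots,y_{n/2}\}$ is a pairing of all $n$ vertices, the guaranteed paths $P_1,\dots,P_{n/2}$ are edge-disjoint, so
\[
\sum_{i=1}^{n/2} \dist(x_i,y_i) \;\le\; \sum_{i=1}^{n/2}|E(P_i)| \;\le\; |E(G)| \;\le\; \frac{\Delta n}{2},
\]
using that each $P_i$ has length at least the distance between its endpoints, that edge-disjointness forbids any edge from being counted twice, and that $|E(G)|\le \Delta n/2$. Thus \emph{every} pairing of the vertices has average pair-distance at most $\Delta$, and the whole proof reduces to producing one pairing whose average pair-distance exceeds $\Delta$ once $n$ is too large.

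To build such a pairing I would first bound how many vertices lie close to a fixed vertex $v$. A breadth-first search gives $|S_1(v)|\le\Delta$ and $|S_{i+1}(v)|\le(\Delta-1)|S_i(v)|$ for $i\ge1$ (each vertex of $S_{i+1}$ has a neighbour in $S_i$, and each vertex of $S_i$ has at most $\Delta-1$ neighbours further out), so $|S_i(v)|\le\Delta(\Delta-1)^{i-1}$ and hence the ball of radius $\Delta$ satisfies $|B_\Delta(v)| = 1+\sum_{i=1}^{\Delta}|S_i(v)|\le\Delta^\Delta$ for $\Delta\ge3$. In particular every vertex has fewer than $\Delta^\Delta$ other vertices within distance $\Delta$.

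Now consider the auxiliary graph $H$ on $V(G)$ in which $u$ and $w$ are adjacent precisely when $\dist_G(u,w)>\Delta$. By the ball bound each vertex is non-adjacent in $H$ to at most $|B_\Delta(v)|-1<\Delta^\Delta$ vertices, so $\delta(H)\ge n-\Delta^\Delta$. If $n\ge 2\Delta^\Delta$ then $\delta(H)\ge n/2$, and since $n$ is even a graph of minimum degree at least $n/2$ has a perfect matching; this matching is a pairing of $V(G)$ in which every pair is at distance at least $\Delta+1$. For that pairing $\sum_i\dist(x_i,y_i)\ge(\Delta+1)\tfrac n2>\tfrac{\Delta n}{2}$, contradicting the displayed inequality. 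Hence $n<2\Delta^\Delta$, which gives the claimed bound.

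The routine points to dispose of are the extreme cases $\Delta\in\{1,2\}$, where the degree condition already forces $G$ to be a short path or small cycle with $n$ trivially bounded, and the exact constant in the ball estimate (the inequality $|B_\Delta(v)|\le\Delta^\Delta$ is where the factor $2$ in $2\Delta^\Delta$ is spent). The step I expect to carry the real weight is the very first inequality: recognising that edge-disjointness converts path-pairability into a linear upper bound on the total routed distance is the crux, and everything after it is standard BFS counting together with the elementary fact that minimum degree $n/2$ forces a perfect matching.
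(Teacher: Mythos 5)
Your proof is correct, but note that the paper itself offers no proof of this statement: it is Faudree's theorem, quoted from reference [F], so there is nothing internal to compare against. Evaluated on its own merits, your argument is complete and sound. The opening inequality (edge-disjointness turns path-pairability into the bound $\sum_i \dist(x_i,y_i) \le e(G) \le \Delta n/2$ for \emph{every} pairing) is exactly the right lever; the BFS estimate with branching factor $\Delta-1$ gives $|B_\Delta(v)| \le 1+\sum_{i=1}^{\Delta}\Delta(\Delta-1)^{i-1}$, and one can check this is indeed at most $\Delta^\Delta$ for all $\Delta \ge 3$ (e.g.\ $22\le 27$ at $\Delta=3$, and for $\Delta\ge 4$ use $(\Delta-1)^\Delta\le \Delta^\Delta/e$); the cruder bound $\sum_{i\le\Delta}\Delta^i$ would \emph{not} suffice, so your care with the $\Delta-1$ factor is doing real work. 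The passage from minimum degree $n-\Delta^\Delta\ge n/2$ in the auxiliary graph $H$ to a perfect matching via Dirac's theorem is fine (path-pairable graphs have an even number of vertices by definition), and the final contradiction $(\Delta+1)\tfrac n2 \le \Delta \tfrac n2$ is immediate. It is worth pointing out that your method — build an auxiliary graph joining pairs of vertices that are far apart, extract a perfect matching from a Hamilton cycle guaranteed by Dirac, and then let edge-disjointness force too many edges — is precisely the technique this paper deploys elsewhere: Lemma 3.3 (the pairing lemma feeding the main theorem) constructs the auxiliary graph $G_U$, takes a Hamilton cycle, and fixes a matching in exactly this way. So your proof is not only valid but stylistically consonant with the paper; it is also essentially the standard argument for Faudree's bound.
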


Letting $\Delta_{\min}(n) := \min\{\Delta(G): G \text{ is a path-pairable graph on $n$ vertices}\}$,
this result is equivalent to 
\[
	\Delta_{\min}(n) \geq c_1\frac{\log n}{\log\log n},
\]
for some constant $c_1$. To date, the best known upper bound on $\Delta_\text{min}(n)$ is due to  Gy\H ori, Mezei, and M\'esz\'aros, exhibiting a path-pairable graph with maximum degree $\Delta \approx 5.5\cdot\log n$~\cite{ntp}. In summary, we have the following general asymptotic bounds on $\Delta_\text{min}(n)$:
\[c_1\frac{\log n}{\log\log n} \leq \Delta_\text{min}(n) \leq c_2\log n.\] 

We are interested in determining the behaviour  of the maximum degree in path-pairable \emph{planar} graphs. Let us define $\Delta^{p}_\text{min}(n)$ to be
\[
\min \{\Delta(G):G \text{ is a path-pairable planar graph on } n \text{ vertices}\}.	\]

Faudree, Gy\'arf\'as and Lehel \cite{pp} proved that a path-pairable graph without a $K_{1,n}$ must have at least $3n/2-\log n-c$ edges, for some absolute constant $c$. Although planar graphs might have considerably more than $(3/2-o(1))n$ edges we wished to determine whether planarity would be enough to force a vertex of linear degree.

We first note that a simple application of the Planar Separator Theorem of Lipton and Tarjan~\cite{LiptonTarjan} shows that every path-pairable planar graph on $n$ vertices must contain a vertex of degree at least $c\sqrt{n}$. Indeed, if $G$ is such a graph, then the Separator Theorem allows us to partition $V(G)$ into three sets $S$, $A$, $B$, where $|S| = O(\sqrt{n})$, $|A| \leq |B| \leq 2n/3$, and there are no edges between $A$ and $B$. Now, while path-pairable graphs $G$ need not be highly connected or edge connected, they must satisfy certain connectivity-like conditions. More precisely, they must satisfy the $\emph{cut-condition}$: for every subset $X \subset V(G)$ of size at most $n/2$, there are at least $|X|$ edges between $X$ and $V(G)\setminus X$. Note that the cut-condition is not sufficient to guarantee path-pairability; see \cite{gm_pp} for additional details. Accordingly, since $n/4 < |A| < n/2$ and there are no edges between $A$ and $B$, the cut-condition implies that there are at least $|A|$ edges between $A$ and $S$. We therefore obtain a vertex in $S$ of degree $\Omega(\sqrt{n})$.

Our main theorem, which we state below, shows that we can do much better than this. Namely, every path-pairable planar graph must have a vertex of \emph{linear} degree.

\begin{restatable}{thm}{thmPPP}\label{thm:PPP}
There exists $c \geq 10^{-10^{10}}$ such that if $G$ is a path-pairable planar graph on $n$ vertices then $\Delta(G) \geq cn$.
\end{restatable}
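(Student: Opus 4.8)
The plan is to reduce the theorem to a purely edge-isoperimetric statement about planar graphs and then to exhibit a single sparse, balanced cut that violates the cut-condition. Recall from the discussion preceding the statement that path-pairability forces the cut-condition: every $X \subseteq V(G)$ with $|X| \le n/2$ satisfies $e(X, V(G)\setminus X) \ge |X|$. Hence, to show that a planar graph $G$ with $\Delta(G) < cn$ cannot be path-pairable, it suffices to produce one set $X$ with $|X| \le n/2$ and $e(X, V(G)\setminus X) < |X|$: pairing each vertex of $X$ injectively with a vertex of $V(G)\setminus X$, and pairing the remaining $n-2|X|$ vertices of $V(G)\setminus X$ arbitrarily, demands at least $|X|$ edge-disjoint paths crossing the cut, which is impossible. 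Thus the whole theorem follows from the assertion that every $n$-vertex planar graph of maximum degree at most $cn$ admits a cut whose smaller side contains strictly more vertices than the number of edges crossing it.

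First I would push the argument already used for the $\sqrt n$ bound as far as it goes, in order to locate exactly where it fails. The Lipton--Tarjan separator yields $S, A, B$ with $|S| = O(\sqrt n)$, $\max(|A|,|B|) \le 2n/3$, and no $A$--$B$ edges; taking $X$ to be the smaller of $A, B$ gives $n/3 - O(\sqrt n) \le |X| \le n/2$ with its entire boundary inside $S$, so $e(X, V(G)\setminus X) = e(X, S) \le \sum_{v\in S}\deg(v) \le |S|\,\Delta = O(\sqrt n\,\Delta)$. This certifies a cut-condition violation only while $\Delta \lesssim \sqrt n$, recovering the $\Omega(\sqrt n)$ degree bound but nothing stronger. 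The essential obstruction is therefore the regime $\sqrt n \lesssim \Delta \lesssim cn$: a vertex separator of size $\sqrt n$ may absorb as many as $\Delta\sqrt n \gg n$ boundary edges, so a cut routed through it carries far more than $|X|$ edges and certifies nothing at all.

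To get past this I would replace the vertex separator by a balanced \emph{edge} separator that is insensitive to a few high-degree vertices. The target is a cut $(X, V(G)\setminus X)$ with both sides of size $\Omega(n)$ and only $O(\sqrt{\Delta n})$ crossing edges. Since $\sqrt{\Delta n} \le n$ whenever $\Delta \le n$, for $\Delta < cn$ the number of crossing edges is $O(\sqrt c\,n)$, which is strictly smaller than the smaller side $\Omega(n)$ as soon as $c$ is a sufficiently small absolute constant; this violates the cut-condition and proves the theorem. The mechanism behind such edge-separator bounds is to weight each vertex by its degree (so the total weight is $2e(G) \le 6n$) and to extract a separator whose cost is measured by the crossing edges rather than by the separating vertices, which is precisely what converts the useless $\Delta\sqrt n$ into the desired $\sqrt{\Delta n}$.

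I expect the balanced edge separator to be the main obstacle, and it is also the likely source of the enormous constant. Producing a cut that is simultaneously (i) vertex-balanced with both sides $\Omega(n)$, (ii) of edge-size $O(\sqrt{\Delta n})$, and (iii) proved in a self-contained way is delicate, because one must control the interaction of the high-degree vertices with the separator at every recursive level. A natural self-contained route is an iterated separator decomposition: repeatedly split the graph, charge the boundary edges guaranteed by the cut-condition against the separators produced, and track how this demand concentrates. The constant losses accumulated across the $\Theta(\log n)$ recursion levels, together with the balancing and triangulation steps, are exactly what would drive $c$ down to a tower-type value such as $10^{-10^{10}}$. Once the edge separator is established, however, the contradiction with the cut-condition is immediate, and the theorem follows.
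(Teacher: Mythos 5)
Your reduction is sound: the cut-condition is indeed necessary for path-pairability (the paper itself notes this in the introduction), so it would suffice to show that every $n$-vertex planar graph with $\Delta(G) < cn$ admits a partition $(X, V(G)\setminus X)$ with $n/3 \leq |X| \leq n/2$ and fewer than $|X|$ crossing edges. The genuine gap is the claim you yourself flag as ``the main obstacle'': that every $n$-vertex planar graph with maximum degree $\Delta$ has a vertex-balanced edge separator of size $O(\sqrt{\Delta n})$. You never prove this, and the mechanism you sketch does not yield it. Weighting vertices by degree and applying the weighted Lipton--Tarjan theorem produces a vertex separator $S$ of size $O(\sqrt{n})$ that balances total degree, but converting $S$ into an \emph{edge} cut still costs up to $\sum_{v \in S}\deg(v) \leq \Delta\sqrt{n}$ edges --- exactly the bound you correctly dismiss as useless; degree-weighting changes which sets are balanced, not the price of cutting around the separator. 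Vertex-splitting runs into the same wall: a high-degree vertex whose copies straddle the separator forces you either to cut its internal path edges (which are not edges of $G$) or to pay up to $\Delta$ crossing edges for that single vertex. The star $K_{1,n-1}$ shows the factor $\sqrt{\Delta}$ must be earned by a genuinely different argument: there, every balanced edge cut has size $\Omega(n) = \Omega(\sqrt{\Delta n})$, so no bound of the form $O(\sqrt{n}) \cdot \text{(bounded cost per separator vertex)}$ can be correct in general. Your fallback paragraph (``iterated separator decomposition \dots track how this demand concentrates'') is too vague to evaluate and does not constitute a proof.

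It is worth saying that the missing statement is in fact a known theorem: planar graphs have balanced edge separators of size $O\bigl(\sqrt{\textstyle\sum_v \deg(v)^2}\bigr) = O(\sqrt{\Delta n})$ (Gazit and Miller, ``Planar separators and the Euclidean norm''; see also Diks, Djidjev, S\'ykora and Vr\v{t}o on edge separators of planar graphs). If you cite such a result, your argument closes and is genuinely different from the paper's proof, which uses no separator machinery at all: the paper partitions $V(G)$ into low- and high-degree classes, uses the full strength of path-pairability (via Lemma~\ref{lem_main_ppp} and the multigraph/good-matching argument of Corollary~\ref{cor_multiedges}) to force at least $2n$ edges across a bipartite planar cut, and contradicts the $2n-4$ edge bound for bipartite planar graphs. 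Your route would then be shorter, would give a vastly better constant (on the order of $10^{-3}$ rather than $10^{-10^{10}}$, which is harmless since the theorem only asks for some $c \geq 10^{-10^{10}}$), and would prove something strictly stronger, namely that the cut-condition alone forces linear maximum degree in planar graphs. But as written --- with the separator theorem neither proved nor cited, and the sketched derivation of it incorrect --- the proposal has a real gap at its central step.
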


We have not made an attempt to optimize the constant $c$ obtained in the proof. The value we give is surely far from the truth.

In the other direction, there are easy examples of path-pairable planar examples with very large maximum degree; for example, consider the star $K_{1, n-1}$. Our second result finds an infinite family of path-pairable planar graphs with smaller (but of course still linear) degree.

\begin{restatable}{thm}{thmConstruction}\label{thm:construction}
There exist path-pairable planar graphs $G$ on $n$ vertices with $\Delta(G)=\frac{2}{3}n$.
\end{restatable}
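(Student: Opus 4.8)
The plan is to exhibit an explicit family and check the three required properties—planarity, maximum degree exactly $\frac{2}{3}n$, and path-pairability—of which only the last is substantial. For an odd integer $m$, let $G$ have three \emph{hubs} $h_1,h_2,h_3$ spanning a triangle, together with three families $A,B,C$ of $m$ vertices each, where every vertex of $A$ is joined to $\{h_1,h_2\}$, every vertex of $B$ to $\{h_2,h_3\}$, and every vertex of $C$ to $\{h_1,h_3\}$. Then $n=3m+3$ is even; $G$ is planar (it is obtained from a triangle by gluing, along each of its three edges, a copy of the planar graph $K_{2,m}$ sharing the two endpoints of that edge); every non-hub vertex has degree $2$; and $\deg(h_i)=2+2m=\frac{2}{3}n$, so $\Delta(G)=\frac{2}{3}n$. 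Hence the entire content is to prove that $G$ is path-pairable.

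To route an arbitrary pairing I would first observe that it suffices to route every pair by a path whose \emph{internal} vertices are all hubs. Under this restriction no non-hub vertex is ever a through-vertex, so each edge incident to a non-hub vertex can be used only by the unique pair containing that vertex; consequently \emph{all} non-hub edges are used at most once, automatically. I would then split the pairs into \emph{ordinary pairs} (both endpoints in $A\cup B\cup C$) and \emph{hub pairs} (at least one endpoint among $h_1,h_2,h_3$), and dispatch the ordinary pairs first. The key structural fact is that any two families share a hub ($A,B$ share $h_2$; $A,C$ share $h_1$; $B,C$ share $h_3$), so every ordinary pair $\{x,y\}$ is routable by a path of length at most $2$ through a single hub—their shared hub if $x,y$ lie in different families, and either of its two hubs if they lie in the same family. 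These paths use only non-hub edges, never a triangle edge, and by the observation above they are edge-disjoint no matter how the (free) choices are made; in particular the overload of a hub that wrecks the naive two-hub construction cannot occur, since each hub's degree $2m+2$ exactly absorbs its $2m$ family-edges together with its two triangle edges.

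It remains to route the hub pairs, of which there are at most three. A pair $\{h_i,x\}$ with $x$ adjacent to $h_i$, or a pair $\{h_i,h_j\}$, is routed by the single joining edge; a pair $\{h_i,x\}$ with $x$ in the family opposite $h_i$ is routed as $h_i\,h_j\,x$, using a triangle edge $h_ih_j$ at a hub $h_j$ adjacent to $x$ followed by $x$'s own edge $h_jx$. Since ordinary routes avoid all triangle edges, the only resource shared among the at most three hub pairs is the set of three triangle edges, and each pair $\{h_i,x\}$ that needs one has a free choice between the two triangle edges at $h_i$. I expect the main (though mild) obstacle to be exactly this last coordination: a short case analysis on how the three hubs are distributed among the pairs, producing a system of distinct representatives among the triangle edges—for instance $h_1\mapsto h_1h_2,\ h_2\mapsto h_2h_3,\ h_3\mapsto h_1h_3$ in the extremal case where all three hub pairs demand a triangle edge—shows that an edge-disjoint assignment always exists, completing the routing of every pairing and hence the proof.
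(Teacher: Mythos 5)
Your proposal is correct and is essentially the paper's own proof: the graph is identical (three hub vertices spanning a triangle, each adjacent to two of the three families of $m=2k-1$ degree-two vertices), and your routing scheme matches the paper's case analysis---length-$\leq 2$ paths through a shared hub for pairs of non-hub vertices, direct edges for hub--hub and adjacent hub--vertex pairs, and distinct triangle edges for the at most three remaining hub pairs. The only cosmetic difference is that the paper resolves the triangle-edge conflicts with a fixed cyclic rotation $x_{AB}\to x_{BC}\to x_{CA}\to x_{AB}$, whereas you argue via a system of distinct representatives, which produces the same assignment.
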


Combining Theorems~\ref{thm:PPP} and~\ref{thm:construction}, we have that
\[
	10^{-10^{10}}n \leq \Delta^p_{\min}(n) \leq \frac{2}{3}n.
\]
However, there is currently a significant gap between the constants in the upper and lower bounds. Closing this gap and finding the truth is an interesting open problem. 

\subsection{Organization}
The remainder of the paper is organized as follows. In the next short section, we shall describe our construction establishing Theorem~\ref{thm:construction}. The third section of this paper contains a proof of our main theorem, Theorem~\ref{thm:PPP}. This proof relies on three preparatory lemmas and on some common facts concerning planar graphs. In particular, we use heavily the fact that any subset $X$ of the vertices of a planar graph induces less than $3|X|$ edges, and any bipartite planar graph on $n$ vertices has less than $2n$ edges. Finally, we close with some remarks and open problems.
\subsection{Notation}
Our notation is standard. Thus, for a graph $G$ and two subsets $X, Y \subset V(G)$ we say that a path in $G$ is an $X-Y$ path if 
it begins in $X$ and ends in $Y$. If $X = \{x\}$ and $Y = \{y\}$ are singletons, we shall simply say that the path is an $x-y$ path. 
For subsets $X, Y \subset V(G)$, $e(X, Y)$ is the number of edges with one endpoint in $X$ and the other in $Y$. As usual, $G[X]$ denotes the graph induced in $G$ with vertex set $X$.

\section{The Construction}
Our aim in this section is to prove Theorem~\ref{thm:construction}, which we restate here for convenience.
\thmConstruction*

\begin{proof}
Let $G$ be a graph on $n=6k$ vertices  with vertex set $V(G)=A\cup B \cup C \cup \{x_{AB},x_{BC}, x_{CA}\}$ where $|A|=|B|=|C|=2k-1$, and $x_{AB},x_{BC},x_{CA}$ denote three additional vertices forming a triangle such that $x_{AB},x_{BC},x_{CA}$ are joined to every vertex in $A\cup B$, $B\cup C$, and $C\cup A$, respectively. This graph is clearly planar. Let $\mathcal{P}$ be a pairing of the vertices and denote $u,v$ to be a pair of \emph{terminals} if $\{u,v\} \in \mathcal{P}$; we define the following pairing scheme depending on the position of the terminals:
\begin{enumerate}

\item If $\{u,v\}\subset \{x_{AB},x_{BC},x_{CA}\}$, join $u$ and $v$ by the  unique direct edge between them. 

\item If $u\in \{x_{AB},x_{BC},x_{CA}\}$ and $v\in A\cup B\cup C$ such that there exists a direct edge between $u$ and $v$, join them by this edge.

\item If $u\in \{x_{AB},x_{BC},x_{CA}\}$ and $v\in A\cup B\cup C$ such that there is no  edge between the terminals: we define a cyclic rotation $x_{AB}\rightarrow x_{BC}\rightarrow x_{CA}\rightarrow x_{AB}$ on the edges of the triangle formed by $x_{AB},x_{BC},x_{CA}$ and join $u$ and $v$ by a path of length $2$ by going along the directed edges. For example, if $u=x_{AB}$ and $v\in C$, we join the terminals by the path $ux_{BC}v$. The remaining cases can be dealt using the same pattern.

\item If $u,v\in A\cup B\cup C$ and they are in the same class, choose an arbitrary common neighbour (out of the two available) of $u$ and $v$ from $\{x_{AB},x_{BC},x_{CA}\}$ to join the terminals by a path of length 2. 

\item If $u,v\in A\cup B\cup C$ and they are in different classes, choose the unique common neighbor of $u$ and $v$ from $\{x_{AB},x_{BC},x_{CA}\}$ to join the terminals by a path of length 2.
\end{enumerate}

It is straightforward to check that the above instructions find edge-disjoint paths joining terminals, regardless of the choice of $\mathcal{P}$. 
\end{proof}

\section{The Proof of Theorem~\ref{thm:PPP}}

The aim of this section is to prove our main theorem, Theorem~\ref{thm:PPP}. 
Our prove is based on three preparatory lemmas. First, we shall introduce some terminology.
Let $G$ be a multigraph. We say that two multiedges $e, f$ of $G$ are at distance $d$ if the 
shortest path in $G$ joining an endpoint of $e$ and an endpoint of $f$ has length $d$. If two multiedges are at distance $0$, 
we shall simply say they are \emph{incident}.
Further, we shall refer 
to a matching of size $k$ as a $k$-\emph{matching}.
We say that a $k$-matching is \emph{good} if every pair of edges in the matching is at distance exactly $1$.
Notice that contracting all the edges of a good $k$-matching results in the complete graph $K_{k}$ (with potential multiple edges and loops).
\\

Our first lemma says that in any multigraph either some multiedges `cluster' together or many pairs of multiedges are far apart, or one can find a good $k$-matching.
We shall need the following inequality.
\begin{fact}\label{fact}
    If $k \ge 2$ then $2^{-k}\left(\frac{1+2^{-k-1}}{(1-2^{-k})^{2}}\right) \le 2^{-k+1}$.
\end{fact}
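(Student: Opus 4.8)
The plan is to reduce the claimed inequality to an elementary polynomial inequality in the single quantity $2^{-k}$ and then verify it by factoring.

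First I would cancel the common factor $2^{-k}$ from both sides. Since $2^{-k} > 0$, the inequality is equivalent to $\frac{1 + 2^{-k-1}}{(1 - 2^{-k})^{2}} \le 2$. Because $1 - 2^{-k} > 0$ for every $k \ge 1$, the denominator $(1-2^{-k})^{2}$ is strictly positive, so I may clear it without reversing the inequality, reducing the claim to $1 + 2^{-k-1} \le 2(1 - 2^{-k})^{2}$.

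Next I would expand the right-hand side as $2 - 2^{-k+2} + 2^{-2k+1}$ and move everything to one side, so that the goal becomes showing $2^{-2k+1} + 1 - 2^{-k-1} - 2^{-k+2} \ge 0$. The one step that requires a little foresight --- and which I expect to be the only real idea in an otherwise routine computation --- is to recognize that this expression factors neatly as $(2^{-k+2} - 1)(2^{-k-1} - 1)$; this can be confirmed by direct expansion, since $(2^{-k+2}-1)(2^{-k-1}-1) = 2^{-2k+1} - 2^{-k+2} - 2^{-k-1} + 1$.

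Finally I would conclude using the hypothesis $k \ge 2$. Writing $t = 2^{-k}$, the hypothesis gives $t \le \tfrac14$, hence $2^{-k+2} = 4t \le 1$ and $2^{-k-1} = t/2 \le \tfrac18 < 1$. Thus both factors $2^{-k+2} - 1$ and $2^{-k-1} - 1$ are nonpositive (with the first vanishing exactly at $k = 2$), so their product is nonnegative, which is precisely the rearranged inequality. Reading the chain of equivalences backwards yields the Fact. No step presents a genuine obstacle; the only thing to be careful about is confirming that each cancellation or multiplication is by a strictly positive quantity, so that the direction of the inequality is preserved throughout.
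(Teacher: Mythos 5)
Your proof is correct and follows essentially the same route as the paper, which reduces the inequality by the same chain of equivalences to the factored form $(2^{-k+2}-1)(2^{-k-1}-1) \ge 0$ and concludes from $k \ge 2$ that both factors are nonpositive. Your write-up just makes explicit the positivity checks and the final sign argument that the paper leaves to the reader.
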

The above inequality is easily seen to be equivalent to $(2^{-k+2}-1)(2^{-k-1}-1) \ge 0$.

\begin{lemma}\label{lem:multigraph}
    Let $k$ be a natural number and $\varepsilon_{1}, \varepsilon_{2}$ be positive reals such that $\varepsilon_{1} + \varepsilon_{2} \le 2^{-k}$.
    Then, for sufficiently large $M = M(k)$, if $G$ is a multigraph on $M$ multiedges, then at least one of the following conditions is satisfied.
    \begin{enumerate}
        \item \label{item_1} There is a multiedge in $G$ which is incident with at least $\varepsilon_{1} M$ multiedges;
        \item \label{item_2} There are at least $\varepsilon_{2} \binom{M}{2}$ pairs of multiedges which are at distance greater than $1$;
        \item \label{item_3} $G$ contains a good $k$-matching.
    \end{enumerate}
\end{lemma}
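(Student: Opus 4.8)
The plan is to reformulate condition \ref{item_3} as a clique-finding problem in an auxiliary graph and then extract that clique by a greedy deletion argument controlled by the Fact. I would introduce the auxiliary complete graph $\mathcal H$ whose vertices are the $M$ multiedges of $G$, and classify each pair according to the distance between the two multiedges: call a pair \emph{incident} if the distance is $0$, \emph{good} if it is exactly $1$, and \emph{far} if it exceeds $1$. The key observation is that a set of $k$ multiedges is a good $k$-matching precisely when it is pairwise good, since pairwise distance $1$ already forces the edges to be vertex-disjoint (hence a matching) and at distance exactly $1$ (hence good). Thus condition \ref{item_3} is exactly the statement that the ``good graph'' on $\mathcal H$ contains a clique of size $k$, and I would run the whole argument in these terms.

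Next I would assume that neither condition \ref{item_1} nor condition \ref{item_2} holds and read off the consequences. The failure of \ref{item_1} says that every multiedge is incident with fewer than $\varepsilon_1 M$ others, i.e.\ the incident graph has maximum degree below $\varepsilon_1 M$. The failure of \ref{item_2} says that there are fewer than $\varepsilon_2\binom{M}{2}$ far pairs in total. Combining these with the hypothesis $\varepsilon_1+\varepsilon_2\le 2^{-k}$, the number of non-good pairs is at most roughly $(\varepsilon_1+\varepsilon_2)\binom{M}{2}\le 2^{-k}\binom{M}{2}$, so the good graph has edge density at least $1-2^{-k}-o(1)$.

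From here I would present two ways to finish. The quick route is Tur\'an's theorem: since $2^{-k}<\tfrac{1}{k-1}$ for every $k\ge 2$, a good graph of density exceeding $1-\tfrac{1}{k-1}$ must contain $K_k$ once $M$ is large enough to absorb the lower-order terms, giving the desired good $k$-matching. The explicit route, which is what the stated Fact is designed for, is a greedy extraction: build the clique one vertex at a time while maintaining a candidate set $T_0\supseteq T_1\supseteq\cdots$ equal to the common good-neighbourhood of the vertices chosen so far. At each step I would use an averaging argument against the global far-pair budget to select a vertex with few far-neighbours in the current candidate set, and then delete that vertex together with its (fewer than $\varepsilon_1 M$) incident neighbours and its few far-neighbours. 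The role of the Fact is to bound the total shrinkage accumulated over the $k$ rounds by $2^{-k+1}M$, which keeps the candidate set nonempty and lets the process run for all $k$ steps.

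The main obstacle is the bookkeeping of the greedy route rather than any conceptual difficulty: the failure of \ref{item_1} is a \emph{per-vertex} bound on incident degree, whereas the failure of \ref{item_2} is a \emph{global} count of far pairs, so one must combine a local and a global quantity and control how the deletions compound across rounds. This is precisely where the hypothesis $\varepsilon_1+\varepsilon_2\le 2^{-k}$ and the inequality of the Fact enter, guaranteeing that the candidate set loses at most a $2^{-k+1}$ fraction over the entire process and therefore survives. If one is content to invoke Tur\'an's theorem, this obstacle disappears altogether and the Fact is not needed.
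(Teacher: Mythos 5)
Your proposal is correct, and your primary (Tur\'an) route is genuinely different from the paper's proof. The paper argues by induction on $k$: assuming conditions~\ref{item_1} and~\ref{item_2} fail, an averaging argument yields a multiedge $e$ at distance exactly $1$ from a set $E'$ of at least $(1-2^{-k})M-1$ multiedges, and the induction hypothesis is applied to the multigraph spanned by $E'$; Fact~\ref{fact} is used precisely to verify that the parameters, rescaled to this smaller multigraph, still satisfy the hypothesis at level $k-1$, and the good $(k-1)$-matching found there together with $e$ gives the good $k$-matching. Your ``greedy'' second route is essentially this induction unrolled, and the bookkeeping you leave sketchy is exactly what the paper's induction organizes: the averaging bound on far-neighbours degrades as the candidate set shrinks, so the compounding must be controlled round by round (this is what Fact~\ref{fact} does), and a naive uniform bound of $2^{-k+1}M$ on total shrinkage is too crude for small $k$. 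Your primary route avoids all of this. The reformulation is sound: two multiedges at distance exactly $1$ cannot share a vertex, so $k$ multiedges pairwise at distance exactly $1$ automatically form a matching, i.e.\ a good $k$-matching is the same as a $K_k$ in the auxiliary ``good graph'' on the $M$ multiedges. If conditions~\ref{item_1} and~\ref{item_2} fail, there are fewer than $\varepsilon_1 M^2/2$ incident pairs and fewer than $\varepsilon_2\binom{M}{2}$ far pairs, so the good graph has more than $\bigl(1-\varepsilon_1-\varepsilon_2\bigr)\tfrac{M^2}{2}-\tfrac{M}{2}\ge\bigl(1-2^{-k}\bigr)\tfrac{M^2}{2}-\tfrac{M}{2}$ edges; since $2^{-k}<\tfrac{1}{k-1}$ for all $k\ge 2$, this exceeds the Tur\'an threshold $\bigl(1-\tfrac{1}{k-1}\bigr)\tfrac{M^2}{2}$ once $M\ge M(k)$, producing the required $K_k$ (the case $k=1$ being trivial). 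This buys both brevity---no induction and no Fact~\ref{fact}---and strength: the hypothesis $\varepsilon_1+\varepsilon_2\le 2^{-k}$ can be relaxed to $\varepsilon_1+\varepsilon_2\le\tfrac{1}{k-1}-\delta$ for any fixed $\delta>0$, so that in Corollary~\ref{cor_multiedges} (the case $k=5$) the threshold $1/32$ could be raised to anything below $1/4$, with corresponding slack available in the main theorem's constant. What the paper's route buys in exchange is self-containedness: it uses nothing beyond an averaging argument and the elementary Fact~\ref{fact}, whereas yours invokes Tur\'an's theorem.
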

\begin{proof}
    We shall use induction on $k$.
    The base case when $k=1$ is trivial - Condition~\ref{item_3} is always satisfied.
    Assume then that $k \ge 2$ and the lemma is true for $k-1$.

    Suppose every multiedge is incident with at most $\varepsilon_{1} M$ multiedges and at most $\varepsilon_{2} \binom{M}{2}$ pairs of multiedges are at distance greater than $1$.
    We shall show that $G$ contains a good $k$-matching.
    By an averaging argument there is a multiedge $e$ which is at distance at most $1$ from at least $(1-\varepsilon_{2})M-1$ multiedges.
    Let $E'$ be the set of those multiedges which are at distance exactly $1$ from $e$.
    It follows from our assumptions that $M' = |E'| \ge (1-\varepsilon_{1}-\varepsilon_{2})M-1 \ge (1-2^{-k})M-1$.
    Let $G'$ be the multigraph spanned by $E'$.
    By assumption, at most $\varepsilon_{2}\binom{M}{2}$ of the multiedges in $G'$ are at distance greater than $1$.
    Therefore, since $M \le \frac{M'+1}{1-2^{-k}}$, for large enough $M$ (and hence large enough $M'$) we have that at most
\begin{align*}
\varepsilon_{2}\binom{M}{2} \le \varepsilon_{2}\binom{\frac{M'+1}{1-2^{-k}}}{2}
&= \frac{\varepsilon_{2}}{(1-2^{-k})^{2}}\left(1+\frac{1}{M'}\right)\left(1+\frac{1+2^{-k}}{M'-1}\right)\binom{M'}{2} \\
&\le \frac{\varepsilon_{2}(1+2^{-k-1})}{(1-2^{-k})^{2}} \binom{M'}{2},
\end{align*}
pairs of multiedges in $G'$ are at distance greater than $1$. Note that for $k \ge 2$ one has
\begin{align*}
\varepsilon_{1}+\varepsilon_{2}\frac{1+2^{-k-1}}{(1-2^{-k})^{2}} &\le \varepsilon_{1}\frac{1+2^{-k-1}}{(1-2^{-k})^{2}}+\varepsilon_{2}\frac{1+2^{-k-1}}{(1-2^{-k})^{2}}\\
&\le 2^{-k}\frac{1+2^{-k-1}}{(1-2^{-k})^{2}} \leq 2^{-(k-1)},
\end{align*}
where the last inequality is precisely Fact~\ref{fact}. Therefore, by the induction hypothesis, $G'$ contains a good $(k-1)$-matching.
    But since $e$ is at distance $1$ from any multiedge in $G'$, we also have a good $k$-matching in $G$.
    \end{proof}     
    

Since we shall be operating with planar graphs, we single out the following corollary.
\begin{corollary}
\label{cor_multiedges}
    Let $M$ be a sufficiently large integer and let $\varepsilon_{1},\varepsilon_{2}$ be positive reals such that $\varepsilon_{1}+\varepsilon_{2} \le \frac{1}{32}$.
    If $G$ is a planar multigraph with $M$ multiedges then either $G$ has a multiedge which is incident with at least $\varepsilon_{1} M$ multiedges or there are at least $\varepsilon_{2} \binom{M}{2}$ pairs of multiedges at distance greater than $1$.
\end{corollary}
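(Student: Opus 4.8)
The plan is to derive this corollary directly from Lemma~\ref{lem:multigraph} by making the single explicit choice $k = 5$. Since we are given $\varepsilon_{1} + \varepsilon_{2} \le \frac{1}{32} = 2^{-5}$, the hypothesis $\varepsilon_{1}+\varepsilon_{2} \le 2^{-k}$ of the lemma is satisfied with this $k$, so for all sufficiently large $M$ one of the three conclusions of Lemma~\ref{lem:multigraph} must hold. Conclusions~\ref{item_1} and~\ref{item_2} are verbatim the two alternatives asserted by the corollary, so the entire task reduces to ruling out conclusion~\ref{item_3}: it suffices to show that a planar multigraph cannot contain a good $5$-matching.

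To do this I would invoke the observation recorded immediately before the lemma, namely that contracting all the edges of a good $k$-matching produces the complete graph $K_{k}$ (possibly with added multiplicities and loops). Concretely, if $e_{1},\dots,e_{5}$ form a good $5$-matching, then after contracting each $e_{i}$ to a single vertex $v_{i}$, the fact that $e_{i}$ and $e_{j}$ are at distance exactly $1$ guarantees an edge between $v_{i}$ and $v_{j}$ for every pair $i \neq j$; hence the contracted multigraph contains $K_{5}$ as a subgraph, and so $G$ has $K_{5}$ as a minor. But planarity is preserved under edge contraction and edge/vertex deletion, so every minor of a planar multigraph is planar, whereas $K_{5}$ is not planar (Kuratowski--Wagner). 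Therefore no planar multigraph admits a good $5$-matching, and conclusion~\ref{item_3} is impossible when $G$ is planar.

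Combining these two observations finishes the argument: for a planar multigraph $G$ on $M$ multiedges (with $M$ large) conclusion~\ref{item_3} of Lemma~\ref{lem:multigraph} with $k=5$ cannot occur, so one of conclusions~\ref{item_1} or~\ref{item_2} holds, which is exactly the statement of the corollary. I do not expect any genuine obstacle here, since all of the substantive work is already carried by Lemma~\ref{lem:multigraph}; the only points requiring care are the bookkeeping identity $\frac{1}{32} = 2^{-5}$ that licenses the choice $k=5$, and the standard fact that planarity is a minor-closed property, which immediately excludes the $K_{5}$-minor forced by a good $5$-matching.
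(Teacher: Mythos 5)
Your proof is correct and follows exactly the route the paper takes: apply Lemma~\ref{lem:multigraph} with $k=5$ (noting $\frac{1}{32}=2^{-5}$) and rule out the third alternative because a good $5$-matching contracts to a $K_5$ minor, which a planar multigraph cannot contain. The paper's proof is just a one-line version of this same argument, so there is nothing to add.
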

\begin{proof}
If $G$ contained a good $5$-matching then it would contain a $K_5$ minor. 
\end{proof}

One strategy in the proof of our main theorem is to consider a suitable bipartition of our path-pairable planar graph, and to 
exploit the fact that any bipartite planar graph on $n$ vertices has at most $2n - 4$ edges. To exploit this last property we 
shall need ways of finding pairings of the vertices such that their corresponding edge-disjoint paths contribute `many' edges 
to the bipartition. This is formalized in the following lemma.

\begin{lem}
    \label{lem_main_ppp}
    Let $D$ be an integer and $0<\varepsilon \leq 1/2$.
    Then there exists $c>0$ such that the following is true.
    Suppose $G$ is a path-pairable planar graph on $n$ vertices with $\Delta = \Delta(G) \le cn$.
    Let $A, U \subset V(G)$ be given with $U \subset A$ such that every vertex in $A$ has degree at most $D$, $|A| \ge (1-\varepsilon)n$ and $|U| \ge \varepsilon n$.
    Let $B = V(G) \setminus A$.
    Then there is a pairing of the vertices in $U$ which contributes to
    at least $2|U| - 16\varepsilon n$ edges between $A$ and $B$.
\end{lem}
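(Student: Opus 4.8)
The plan is to choose the pairing on $U$ so that the paired vertices are forced to lie \emph{far apart} in $G$, and then to convert large total path-length into many crossing edges using the sparsity of $A$ and the planarity of $G[B]$. First, some reductions. Since $G$ is path-pairable (hence $\tfrac{n}{2}$-path-pairable), any pairing of the vertices of $U$ into $\lfloor |U|/2\rfloor$ pairs can be completed to a pairing of all of $V(G)$ and therefore realised by edge-disjoint paths; discarding the superfluous paths yields edge-disjoint paths $P_1,\dots,P_m$ (with $m=\lfloor |U|/2\rfloor$) joining the chosen pairs. If $|U|$ is odd I simply drop one vertex, which changes the final count by $O(1)$.

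The core inequality is an edge-counting bound valid for \emph{every} realisation. Partition the edges of $\bigcup_i P_i$ into those inside $A$, those inside $B$, and those crossing between $A$ and $B$. Since the paths are edge-disjoint, the number of $A$-internal edges used is at most $e(G[A])\le \tfrac12 D|A|\le \tfrac12 Dn$ (every vertex of $A$ has degree at most $D$), and the number of $B$-internal edges used is at most $e(G[B])\le 3|B|\le 3\varepsilon n$ (as $G[B]$ is planar and $|B|\le\varepsilon n$). As each $P_i$ has length at least $\dist_G(u_i,u_i')$, the number of crossing edges used satisfies
\[
 e_{AB}^{\mathrm{used}} \;\ge\; \sum_{i} \dist_G(u_i,u_i') \;-\; \tfrac12 Dn \;-\; 3\varepsilon n .
\]
Thus it suffices to produce a pairing with $\sum_i \dist_G(u_i,u_i')\ge 2|U|+\tfrac12 Dn$. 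Setting $\ell := \lceil 4 + D/\varepsilon\rceil$ and using $|U|\ge \varepsilon n$, it is enough that the chosen pairs have average $G$-distance at least $\ell$, since then $\sum_i \dist_G \ge \ell\,\tfrac{|U|}{2}\ge 2|U|+\tfrac{D}{2}n$, and the core inequality delivers at least $2|U|-3\varepsilon n-O(1)\ge 2|U|-16\varepsilon n$ crossing edges for $n$ large. To obtain such a pairing I would average over a uniformly random perfect matching of $U$: the expected value of $\sum_i \dist_G(u_i,u_i')$ equals $\tfrac{|U|}{2}$ times the average pairwise $G$-distance in $U$, so some pairing attains at least this, and we are done as soon as the average pairwise distance in $U$ is at least $\ell$; equivalently, once all but a negligible fraction of pairs of $U$ are at $G$-distance at least $\ell$.

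The hard part will be exactly this last point: bounding the number of \emph{close} pairs $\{u,u'\}\subset U$ with $\dist_G(u,u')\le \ell$. Pairs close \emph{within} $A$ are harmless, since each vertex has at most $D^{\ell}$ such partners (degrees in $A$ are at most $D$), giving only $O(|U|D^{\ell})=o(|U|^2)$ of them. The danger is pairs made close by a high-degree \emph{hub} in $B$: a single $v\in B$ of degree $cn$ already puts $\binom{cn}{2}$ pairs of its neighbours at distance $2$. Here both hypotheses enter. Planarity forces $\sum_{v\in B}\deg_G(v)=e(A,B)+2e(G[B])=O(n)$, so there are very few hubs and the total number of short connections through $B$ is controlled; and $\Delta\le cn$ with $c$ chosen small (depending on $D$ and $\varepsilon$) makes each such contribution a small multiple of $n^2$, hence a negligible fraction of $\binom{|U|}{2}$. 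Pushing this from distance $2$ up to distance $\ell$, where a short path may chain through several hubs, is the genuinely delicate step. I expect to handle it by noting that any close-via-$B$ pair has both endpoints inside a radius-$\ell$ ball around some $v\in B$, so the number of close pairs is at most $\sum_{v\in B}\binom{|N_\ell^G(v)\cap U|}{2}$, and then bounding these balls using the planar sparsity of the hub structure (invoking Corollary~\ref{cor_multiedges}, if needed, to rule out dense clustering of the high-degree multiedges) together with $c$ sufficiently small. Once the close pairs are shown to number fewer than, say, $\tfrac12\binom{|U|}{2}$, the averaging step above produces a pairing with $\sum_i\dist_G(u_i,u_i')\ge 2|U|+\tfrac12 Dn$, and the core inequality completes the proof.
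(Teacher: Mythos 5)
Your core counting inequality (crossing edges used $\ge \sum_i \dist(u_i,u_i') - e(G[A]) - e(G[B])$, with $e(G[A])\le \tfrac12 Dn$ and $e(G[B])\le 3\eps n$) is correct and close in spirit to how the paper counts. But the proposal has a genuine gap at its central step: the claim that all but a small fraction of pairs of $U$ are at $G$-distance at least $\ell$. This claim cannot be established by the ball-counting you sketch, because planarity plus the degree hypotheses alone do not imply it: short paths may chain through several vertices of $B$. For instance, take $v_0\in B$ adjacent to hubs $v_1,\dots,v_m\in B$, each $v_i$ of degree about $cn$ with its own private neighbours in $A$ (a two-level star; add a long path through $A$ if you want $G[A]$ connected). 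This graph is planar, every vertex of $A$ has bounded degree, $\Delta\le cn+1$, and \emph{every} pair of vertices of $U$ is at distance at most $4$; your quantity $\sum_{v\in B}\binom{|N_\ell^G(v)\cap U|}{2}$ is already of order $\binom{|U|}{2}$ from the single term $v=v_0$, so the bound is vacuous. Of course this graph is not path-pairable, but that is exactly the problem: any proof of your distance claim must invoke path-pairability to exclude such hub clusters, and doing so is essentially the content of the paper's main theorem --- whose proof in turn uses this very lemma (the ``many far-apart multiedges'' outcome of Corollary~\ref{cor_multiedges} is exploited there by building a pairing and then applying Lemma~\ref{lem_main_ppp} to the leftover vertices). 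So as proposed, your plan is either circular or leaves its hardest step unproven.

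The paper's proof avoids this trap by not working with $G$-distance at all. It calls a path \emph{weak} if it begins and ends in $A$, uses no edge inside $B$, and uses at most $2$ cut edges; the set of vertices reachable from a fixed $x$ by a short weak path has size at most $D^{C}\left(1+D^{C+1}\Delta\right)\le \eps n/2$ \emph{purely structurally}, because a weak path cannot pass through more than one vertex of $B$ --- precisely the chaining that defeats your ball bound. One then matches the vertices of $U$ avoiding these small bad sets (via Hamiltonicity of an auxiliary graph), and in any realization each matched pair either (i) is joined by a weak path, which is then forced to be long and so consumes at least $C-2$ of the fewer than $3|A|$ edges inside $A$; (ii) is joined by a path using an edge inside $B$, and there are fewer than $3|B|\le 3\eps n$ such edges in total; or (iii) is joined by a path crossing the cut at least $4$ times, which is the desired contribution. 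In other words, pairs that are close in $G$ via hubs are not avoided but rendered harmless: their paths are expensive in one of two scarce currencies. (A minor further point: even granting your distance claim with a $\tfrac12$ fraction of close pairs, a random matching only guarantees average distance $\ge \ell/2$, so your $\ell$ would need doubling; but this is cosmetic compared with the main gap.)
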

\begin{proof}
We say that a path in $G$ is \emph{weak} if it begins and ends in $A$, uses no edges inside $B$, 
and uses at most $2$ edges between $A$ and $B$. 
Now, let $C := \lceil{4\eps^{-1}}\rceil$ and note that since $\eps \leq 1/2$ we have that $\frac{3}{C-2} \leq \varepsilon$. 
For every $x \in U$, let 
$X(x, C) = \{ u \in U: \dist(u, x) \leq C\}$ and $Y_x = \{ u \in U: \exists$ \text{ a weak } $x-u$ \text{ path in } $G\}$. 
Finally, consider the set $U_x = X(x,C)\cap Y_x$. We claim that $U_x$ is small for every $x \in U$; namely, it is 
easy to see that 
$|U_x| \leq D^C + D^CD\Delta D^C = D^C\left(1 + D^{C+1}\Delta\right)$.
Choose $c = c(D, \varepsilon) = \frac{\eps}{4D^{2C+1}}$ so that $\Delta \le cn$.
Then $|U_x| \leq D^C\left(1 + D^{C+1}\Delta\right) \le \left(D^C+D^{2C+1}\right)\Delta \leq \frac{\varepsilon}{2}n$.

Let us define an auxiliary graph $G_U$ with vertex set $U$ where we join two vertices 
$x, y$ provided $y \notin U_x$ (equivalently, $x \notin U_y$). It is easy to see that $G_U$ has 
a perfect matching (or `almost' perfect, if $|U|$ is odd; this makes no difference for us). Indeed, 
the degree of every vertex in $G_U$ is at least $|U| - \frac{\eps}{2}n \geq |U|/2$, and 
therefore $G_U$ has a Hamilton cycle. Fix a perfect matching $\mM$ in $G_U$ according to this Hamilton 
cycle and fix a pairing $\mP$ of the vertices of $G$ where each edge of $\mM$
forms a pair. Finally, since $G$ is path-pairable, choose a collection of edge-disjoint paths $\mR$ that realize this pairing. Observe that any path from $\mR$ must use an even number of edges between $A$ and $B$.
There are two types of edges in $e = xy \in \mM$ with respect to this realization: either the $x-y$ 
path in $\mR$ is weak but $\dist(x, y) > C$, or this $x-y$ path uses at least $4$ edges between $A$ and $B$. 
Let $\mM = \mE_0 \cup \mE_1 \cup \mE_{2}$, where $\mE_0$ denotes the edges satisfying 
the former condition, $\mE_1$ the latter, and $\mE_2$ denotes the remaining edges.
We claim that most edges are in $\mE_1$.
Indeed, observe that if $e = xy \in \mE_2$, then the $x-y$ path must use edges from $B$.
By planarity we have $e(B) < 3|B|$, and therefore $|\mE_2| < 3\varepsilon n$.
Using planarity again we have that $e(A) < 3|A|$. 
On the other hand, for each edge in $\mE_0$ its path in $\mR$ uses more than $C$ edges, at most $2$ of which are 
in the cut $\{A, B\}$, and none of which belong to $B$. Accordingly, since these paths are edge-disjoint, we have that 
$e(A) \geq (C-2)|\mE_0|$ and so 
\[
	|\mE_0| < \frac{3}{C-2}|A| \leq \eps|A|.
\]

Therefore, $|\mE_1| \geq \frac{1}{2}|U| - \eps|A| - 3\varepsilon n \geq \frac{1}{2}|U|-4\varepsilon n$.
It follows that since every path in $\mR$ pairing an edge in $\mE_1$ contributes at least $4$ edges between $A$ and $B$, and these 
paths must be edge-disjoint, we have 
\[
	e(A, B) > 2|U|-16\varepsilon n.
\]
This completes the proof of Lemma~\ref{lem_main_ppp}.
\end{proof}

Our final lemma allows us to quantify more precisely the degree distribution in any bipartite planar graph.

\begin{lem}
    \label{lem_deg_2}
    Let $G$ be a bipartite planar graph on $n$ vertices with parts $A$, $B$, and let $A' \subset A$ be the set of vertices in $A$ with degree at least $3$.
    Then the following are true.
    \begin{enumerate}
        \item The number of vertices in $A$ with degree exactly $2$ is at least $e(A, B) - n - 3|B|$;
        \item $|A'| < 2|B|$;
        \item $e(A', B) < 6|B|$.
    \end{enumerate}
\end{lem}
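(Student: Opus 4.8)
The plan is to derive all three statements from two standard facts about bipartite planar graphs: that such a graph on $V \geq 3$ vertices has at most $2V-4$ edges, and that the degree sum across the bipartition equals the number of crossing edges. The natural order is to establish (2) first, deduce (3) from it, and then prove (1) using (2). The single device I would introduce is the auxiliary graph $H$ obtained from $G$ by keeping only the vertex set $A' \cup B$ together with the edges joining $A'$ to $B$. Since $G$ is bipartite and $A' \subset A$, every neighbour of a vertex of $A'$ lies in $B$, so $H$ is again bipartite and planar, and the degree of each $v \in A'$ is unchanged from $G$: it is at least $3$.

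For (2), I would combine the lower bound $e(A',B) = \sum_{v \in A'} \deg_H(v) \geq 3|A'|$ with the planar edge bound $e(A',B) \leq 2(|A'|+|B|)-4$. Together these give $3|A'| \leq 2|A'| + 2|B| - 4$, hence $|A'| \leq 2|B| - 4 < 2|B|$. Statement (3) is then immediate: $e(A',B) \leq 2(|A'|+|B|) - 4 < 2|A'| + 2|B| < 6|B|$, where the last step uses $|A'| < 2|B|$ from (2).

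For (1), I would count the edges of $G$ according to the degrees of the vertices of $A$. Writing $d_1$ and $d_2$ for the number of vertices of $A$ of degree exactly $1$ and exactly $2$ (degree-$0$ vertices contribute nothing), we have the identity $e(A,B) = d_1 + 2d_2 + e(A',B)$. Solving this for $d_2$ reduces the target inequality $d_2 \geq e(A,B) - n - 3|B|$ to the equivalent clean statement $d_1 + d_2 + e(A',B) \leq n + 3|B|$. The one point requiring care is that I would bound the vertex count and the edge count \emph{jointly}, not separately: since the vertices counted by $d_1, d_2$ are disjoint from $A'$, we have $d_1 + d_2 \leq |A| - |A'| = n - |B| - |A'|$, and adding $e(A',B) \leq 2|A'| + 2|B| - 4$ makes the $|A'|$ terms partially cancel, yielding $d_1 + d_2 + e(A',B) \leq n + |B| + |A'| - 4$; then $|A'| < 2|B|$ from (2) finishes the bound.

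The main (and essentially only) obstacle is exactly this last piece of bookkeeping. If one instead feeds part (3) in as a black box, bounding $d_1 + d_2 \leq |A|$ and $e(A',B) < 6|B|$ independently, one only reaches $n + 5|B|$ and loses the claimed constant. The savings come from observing that every high-degree vertex of $A$ \emph{simultaneously} consumes one slot in the vertex budget of $A$ and contributes its large degree to $e(A',B)$, so the two estimates must be combined before any constants are discarded. Apart from tracking this cancellation, the argument is routine degree-sum counting; the only other thing to verify is the harmless degenerate case $|A'| + |B| < 3$, where the $2V-4$ edge bound is replaced by direct inspection.
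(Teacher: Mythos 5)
Your proposal is correct and follows essentially the same route as the paper: both derive items (2) and (3) by combining $e(A',B)\geq 3|A'|$ with the planar bipartite bound $e(A',B)\leq 2(|A'|+|B|)-4$, and then prove item (1) by degree-sum bookkeeping in which the $|A'|$ (high-degree) terms are cancelled against the planarity bound rather than estimated separately. The only difference is cosmetic: the paper bounds $A_{\leq 1}$ from an upper bound on $e(A,B)$ and then computes $A_2=|A|-|A'|-A_{\leq 1}$, whereas you rearrange the same identity into the equivalent inequality $d_1+d_2+e(A',B)\leq n+3|B|$ and verify it directly.
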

\begin{proof}
For each $i\geq 0$ let $A_i, A_{\leq i},$ and $A_{\geq i}$ denote the number of vertices in $A$ that have degree $i$ in $G$, degree at most $i$, and degree at least $i$, respectively.
Because of planarity we have that $e(A', B) < 2(|A'| + |B|)$.
Alternatively, $e(A', B) \geq 3|A'|$ so it follows that $A_{\geq 3} = |A'| < 2|B|$, and so
$e(A', B) \le 2(|A'| + |B|) < 6|B|$, establishing the second and third items.
Further, we can bound the number of edges between $A$ and $B$ as
\begin{align*}
    e(A, B) &\leq A_{\leq 1} + 2(|A| - A_{\leq 1} - A_{\ge 3}) + e(A', B) \\
&\leq A_{\leq 1} + 2(|A| - A_{\leq 1} - 2|B|) + 6|B| \\
&\leq 2|A| - A_{\leq 1} - 4|B| + 6|B| \\
&\leq 2|A| - A_{\le 1} + 2|B|.
\end{align*}
It follows that $A_{\le 1} \le 2|A| + 2|B| - e(A, B)$.
Finally, we see that $A_2 = |A| - |A'|-A_{\le 1} > e(A, B) - |A| - 4|B| = e(A, B) - n - 3|B|$, as required.

\end{proof}

We are now in a position to prove our main theorem. First, let us give a rough sketch of the proof. Let $G$ be a 
path-pairable planar graph. We first partition the vertex set of $G$ into the set $A$ of vertices of small degree 
and the set $B$ of vertices of large degree. We can apply Lemma~\ref{lem_main_ppp} to find that there are many 
edges in this cut. We shall then show that most vertices in $A$ have degree $2$ in this bipartite graph.
If $Y \subset A$ denotes the vertices of degree $2$, then we define a planar multigraph with vertex set $B$ where we join $x, y \in B$
whenever there is a $v \in Y$ joined to precisely $x$ and $y$. Now, using Corollary~\ref{cor_multiedges}, we are able to either find 
a vertex of linear degree in $B$, or we can find many pairs of multiedges in our multigraph that are far apart. This, however, allows us 
to find a pairing which contributes to more than $2n$ edges between $A$ and $B$, a contradiction to planarity. 

We restate Theorem~\ref{thm:PPP} for convenience.
\thmPPP*

\begin{proof}
Suppose $G$ is a path-pairable planar graph and
fix some large constant $D$ so that $D^{-1} \leq 8.5\cdot 10^{-6}$. Partition the vertex set of $G$
into sets $A$ and $B$, where $B = \{v \in V(G): d(v) \geq D\}$ and $A = V(G) \setminus B$. Since 
$e(G) < 3n$ it easily follows that $|B| \leq 6D^{-1}n : = \eps n$. Suppose that $\Delta(G) < cn$, where 
$c$ is sufficiently small (depending only on $D$) given by Lemma~\ref{lem_main_ppp}. More precisely, 
we may take $c = \frac{\eps}{4D^{2\lceil{4/\eps}\rceil + 1}}$.

Our aim is to obtain a contradiction to the planarity of $G$, and so there must exist a vertex of degree at least $cn$.
Of course, this is trivial if $cn \leq 1$, so we shall assume throughout that $n \ge 1/c$.
By Lemma~\ref{lem_main_ppp} (with $U = A$) we have that there are at least $2|A|-16\varepsilon n \geq 2n-18\varepsilon n$ edges between $A$ and $B$.

Next, we shall show that there is a large subset of $A$ which induces a graph with maximum degree at most $2$.
To see this, let $A_{0} = A, B_{0} = B$.
Suppose $A_{i}, B_{i}$ have been defined already. If there is a vertex $v \in A_{i}$ such that $d_{A_{i}}(v) > d_{B_{i}}(v)$,
then let $A_{i+1} = A_{i} \setminus \left\{ v \right\}$ and $B_{i+1} = B_{i} \cup \left\{ v \right\}$. 
Notice that $e(A_{i+1}, B_{i+1}) \ge e(A_{i}, B_{i}) + 1$, and so $e(A_{i+1}, B_{i+1}) \ge e(A, B) + i \ge 2n - 18\varepsilon n+ i$.
Let $t \ge 0$ be such that there is no $v \in A_{t}$ with more neighbours in $A_{t}$ than in $B_{t}$.
Observe that $t \le 18\varepsilon n$ (otherwise $e(A_{t}, B_{t}) \ge 2n$), and accordingly $|B_{t}| = |B| + t \le \varepsilon n + 18\varepsilon n = 19\varepsilon n$.

Let $X \subset A_t$ be the set of vertices in $A_{t}$ with at least $3$ neighbours in $A_{t}$.
Since every vertex in $A_{t}$ has more neighbours in $B_{t}$ than in $A_{t}$, we have that every vertex in $X$ has at least $3$ neighbours in $B_{t}$.
Therefore, by Lemma~\ref{lem_deg_2}, $|X| \le 2|B_{t}|$, $e(X, B_{t}) \le 6|B_{t}|$, and there are at least $e(A_{t}, B_{t}) - n - 3|B_{t}| \ge e(A, B) - n - 3|B_{t}|$ vertices in $A_{t}$ with exactly two neighbours in $B_{t}$. 
Let $A^* = A_{t} \setminus X$ and $B^* = B_{t} \cup X$.
Now we have that every vertex in $A^*$ has at most $2$ neighbours in $A^*$ and $|B^*| \le 3|B_{t}| \le 57 \varepsilon n$, so $|A^*| \ge n - 57 \varepsilon n$.
We have to make sure we still have many vertices in $A^{*}$ with exactly two neighbours in $B^{*}$.
Notice that if a vertex $v \in A_{t}$ had two neighbours in $B_{t}$ and was not adjacent to any vertex in $X$ then $v \in A^{*}$ and $v$ still has exactly two neighbours in $B^{*}$.
Therefore we only have to worry about the vertices in $A_{t}$ which are adjacent to some vertices in $X$.
Observe that $e(X, A^*) \le e(X, B_{t}) \le 6|B_{t}|$, and so there are at least $e(A, B) -n - 9|B_{t}| 
\ge (2n - 18\eps n)- n - 9\cdot19\eps n = n - 189\eps n$ vertices in $A^{*}$ with exactly $2$ neighbours in $B^{*}$.
Hence there are at most $189\varepsilon n$ vertices in $A^{*}$ which do not have degree $2$ in $B^{*}$.

We say that an edge $uv \in G$ is \emph{bad} if one of the followings holds:
\begin{enumerate}
    \item(Type I) $uv \in G[B^*]$.
    \item(Type II) $uv \in G[A^*]$ and $u$ (or $v$) has degree not equal to $2$ in $B^*$.
    \item(Type III) $uv \in G[A^*]$, $d_{B*}(u)=d_{B*}(v) =2$, and $N_{B^*}(u) \neq N_{B^*}(v)$.
    \item(Type IV) $uv \in G$, such that $u \in A^*, v \in B^*$, and $d_{B^{*}}(u) \ge 3$.
\end{enumerate}

We have the following bound on the number of bad edges.
\begin{claim}
    There are at most $1233 \varepsilon n$ bad edges.
\end{claim}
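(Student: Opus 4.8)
The plan is to bound the four types of bad edge separately and sum the contributions, with the target allocation being at most $171\varepsilon n$ for Type I, $378\varepsilon n$ for Type II, and $342\varepsilon n$ each for Types III and IV. Throughout I would use the two facts already in place: $|B^*| \le 57\varepsilon n$, and at most $189\varepsilon n$ vertices of $A^*$ fail to have exactly two neighbours in $B^*$.

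Types I and IV are immediate from planarity. Since $G[B^*]$ is planar, $e(B^*) < 3|B^*| \le 171\varepsilon n$, which bounds the Type I edges. For Type IV I would apply Lemma~\ref{lem_deg_2} to the bipartite planar graph whose parts are $A^*$ and $B^*$ and whose edges are the cut edges: its item (3) gives $e(A',B^*) < 6|B^*| \le 342\varepsilon n$, where $A' = \{u\in A^*: d_{B^*}(u)\ge 3\}$ is exactly the set of $A^*$-endpoints of Type IV edges. Type II is a pure degree count: every vertex of $A^*$ has at most two neighbours inside $A^*$ (this is how $A^*$ was built), and at most $189\varepsilon n$ vertices of $A^*$ have $d_{B^*}(\cdot)\ne 2$, so the edges of $G[A^*]$ incident to such a vertex number at most $2\cdot 189\varepsilon n = 378\varepsilon n$.

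The hard part is Type III, and here the naive idea of contracting every degree-two vertex into a multigraph on $B^*$ fails, precisely because a planar multigraph may carry arbitrarily many parallel edges. The fix is a matching-and-contraction trick. The Type III edges form a subgraph $F$ of $G[A^*]$ of maximum degree at most $2$, hence a disjoint union of paths and cycles, so on $T := |E(F)|$ edges it contains a matching $\mathcal{T}$ with $|\mathcal{T}| \ge T/3$. I would contract each edge of $\mathcal{T}$: for a Type III edge $uv$ we have $N_{B^*}(u)=e_u \ne e_v = N_{B^*}(v)$ with $|e_u|=|e_v|=2$, so the contracted vertex $z_{uv}$ is adjacent to $|e_u\cup e_v|\ge 3$ vertices of $B^*$. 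The vertices $z_{uv}$ are distinct (the matching is vertex-disjoint), and keeping only edges to $B^*$ places them on one side of a bipartite planar graph (a minor of $G$) in which each $z_{uv}$ has degree at least $3$. Lemma~\ref{lem_deg_2}(2) then forces $|\mathcal{T}| < 2|B^*|$, so $T \le 3|\mathcal{T}| < 6|B^*| \le 342\varepsilon n$. Summing the four bounds yields at most $171\varepsilon n + 378\varepsilon n + 342\varepsilon n + 342\varepsilon n = 1233\varepsilon n$ bad edges, as claimed. The main obstacle is exactly this Type III bound: the contraction converts the alternation between pairs that would create parallel edges into genuine degree-$\ge 3$ vertices, which is where planarity (morally, a forbidden $K_{3,3}$) finally constrains the count.
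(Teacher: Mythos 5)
Your proof is correct and follows essentially the same route as the paper: identical planarity/degree-count bounds for Types I, II, and IV ($171\varepsilon n$, $378\varepsilon n$, $342\varepsilon n$), and for Type III the same key idea of contracting a matching of Type III edges so that each contracted vertex has at least $3$ neighbours in $B^*$ and Lemma~\ref{lem_deg_2}(2) applies. The only cosmetic difference is that the paper partitions all of $E(G[A^*])$ into three matchings and contracts each one, whereas you extract a single matching containing at least a third of the Type III edges; both exploit the maximum degree $2$ of $G[A^*]$ in the same way and yield the same $6|B^*| \le 342\varepsilon n$ bound.
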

\begin{proof}
    We are going to bound the number of bad edges of each type.

    Note that by planarity, there are at most $3|B^*|$ edges in $B^*$ so there are at most $3|B^*| \le 171 \varepsilon n$ edges of
    Type I.

    Now, since every vertex in $A^{*}$ has at most two neighbours in $A^{*}$, each vertex in $A^{*}$ with degree not equal to $2$ in $B^{*}$ contributes to at most two bad edges of Type II. As there are at most $189\eps n$ vertices in $A^*$ which do not have degree 
    $2$ in $B^*$, it follows that there are at most $378\varepsilon n$ bad edges of Type II.

    Let us consider bad edges of Type III. Since $G[A^*]$ has maximum degree $2$, we can partition the edges of $G[A^*]$ into at most $3$ matchings, $M_{1}, M_{2}, M_{3}$.
    It is well known (and easy to see) that contracting an edge in a planar graph preserves planarity.
    It follows that, for $i \in \left\{ 1, 2, 3 \right\}$, we can contract the edges of $M_{i}$ while still preserving planarity.
    Denote this new graph by $\tilde{G_{i}}$ with vertex set $\tilde{A_{i}} \cup B^{*}$.
    Since $\tilde{G_{i}}$ is planar, from Lemma~\ref{lem_deg_2} we have that there are at most $2|B^{*}|$ vertices in $\tilde{G_{i}}$ with at least $3$ neighbours in $B^{*}$.
    Therefore, at most $2|B^{*}|$ edges in $M_{i}$ can be bad of Type III.
    Hence, there are at most $6|B^{*}| \le 342\varepsilon n$ bad edges of Type III.

    Finally, by Lemma~\ref{lem_deg_2} there can be at most $6|B^*| \le 342 \varepsilon n$ bad edges of Type IV.

    So in total there are at most $1233\varepsilon n$ bad edges of any type.
\end{proof}

Let $Y \subseteq A^{*}$ be the set of vertices with degree exactly $2$ in $B^{*}$.
We now define an auxiliary \emph{multi}graph $G_{B^*}$ in the following way. The vertex set of $G_{B^*}$ is $B^{*}$ and for 
any two vertices $x, y \in B^{*}$, join $x$ to $y$ by an edge for every $v \in Y$ that is joined precisely to $x$ and $y$.

\begin{claim}
    $G_{B^*}$ is planar.
\end{claim}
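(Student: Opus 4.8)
The plan is to realize the multigraph $G_{B^*}$ as a \emph{suppression} of a planar subgraph of $G$, or equivalently to exhibit a planar subgraph of $G$ that is a subdivision of $G_{B^*}$; since planarity is preserved under subdivision, this will finish the argument.

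First I would isolate the relevant subgraph. Let $H$ be the subgraph of $G$ with vertex set $B^{*} \cup Y$ whose edge set consists of all edges of $G$ running between $Y$ and $B^{*}$. By the definition of $Y$, every $v \in Y$ has \emph{exactly} two neighbours in $B^{*}$; denote them $x_{v}$ and $y_{v}$. Since $G$ is a simple graph these two neighbours are distinct, so in $H$ each vertex of $Y$ has degree exactly $2$, and both of its incident edges go to $B^{*}$. In particular $H$ is bipartite with parts $Y$ and $B^{*}$.

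The key observation is that $H$ is a subdivision of $G_{B^*}$. Indeed, each edge of $G_{B^*}$ is by construction of the form $x_{v}y_{v}$ for some $v \in Y$, and subdividing that edge once by inserting the vertex $v$ produces precisely the path $x_{v}\,v\,y_{v}$ appearing in $H$. Carrying this out for every $v \in Y$ transforms $G_{B^*}$ into $H$. Parallel edges of $G_{B^*}$ — which arise whenever several distinct vertices of $Y$ share the same neighbour pair — are simply subdivided independently, and no loops ever occur because $x_{v} \neq y_{v}$. Finally, $H$ is planar, being a subgraph of the planar graph $G$. Since a graph is planar if and only if any subdivision of it is planar, the planarity of $H$ forces the planarity of $G_{B^*}$, as required.

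There is no substantial obstacle here; the proof is essentially a bookkeeping argument around a standard fact. The only points demanding care are verifying that each $v \in Y$ contributes a genuine edge rather than a loop (which is exactly the statement that its two neighbours in $B^{*}$ are distinct, guaranteed by $G$ being simple) and correctly recognising $H$ as a subdivision so that the standard invariance of planarity under subdivision can be invoked.
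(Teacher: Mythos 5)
Your proof is correct and follows essentially the same route as the paper: both isolate the planar bipartite subgraph between $Y$ and $B^{*}$ and then apply a standard planarity-preserving operation, the paper phrasing it as edge contraction and you phrasing it as suppressing the degree-$2$ vertices of $Y$ (i.e.\ viewing that subgraph as a subdivision of $G_{B^*}$), which are the same operation here. The extra care you take about distinct neighbours ruling out loops is a fine, if minor, addition.
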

\begin{proof}
    This is clear since the bipartite graph $G[Y, B^*]$ between $Y$ and $B^*$ is planar, and contracting edges 
    preserves planarity. 
\end{proof}



Apply Corollary~\ref{cor_multiedges} to the multigraph $G_{B^*}$ with $\eps_1 = \eps_2 = 1/100$.
Notice that if an edge in $G_{B^*}$ has degree bigger than $\frac{1}{100}|Y|$ then one of its endpoints has degree at least $\frac{{1}}{200}|Y|$. However, recall that we initially assumed $\Delta(G) < cn$, and certainly $c \leq 1/400$ by our choice of $D$. Accordingly, since $|Y| \geq n - 189\eps n 
\ge n/2$, we obtain a vertex of degree at least 
\[
	2c|Y| \ge cn,
\]
a contradiction. 

So we may assume that there are at least $\frac{1}{100} \binom{|Y|}{2}$ pairs of edges in $G_{B^*}$ which are at distance greater than $1$. Note that if $H$ is any graph on $n$ vertices with edge density at least $\delta$, then it is easy to greedily 
find a matching of size at least $\frac{\delta}{10}n$.
It follows that we may select a collection of pairwise disjoint pairs $\mathcal{P}$ in $Y$, such that $|\mathcal{P}| \geq \frac{1}{1000}|Y|\ge \frac{1}{2000}n$, and such that for every $\left\{ u, v \right\} \in \mathcal{P}$, their corresponding edges in $G_{B^*}$ are at distance greater than $1$.

We need the following two claims.
\begin{claim}\label{claim1}
    \label{claim_path_in_A}
    Let $P$ be a path contained in $A^{*}$ which has at least two vertices and does not contain any bad edges.
    Then every vertex $v \in P$ has the same neighbourhood (of size $2$) in $B^{*}$.
\end{claim}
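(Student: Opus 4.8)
The plan is to read the conclusion off directly from the definitions of the bad edges, one edge of $P$ at a time, and then to propagate the resulting equality of neighbourhoods using the connectivity of the path. No separate machinery is needed; the content is an unpacking of the Type II and Type III conditions.

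First I would fix an arbitrary edge $uv$ of $P$. Since $P$ is contained in $A^*$, the edge $uv$ lies in $G[A^*]$, so it is eligible to be bad of Type II or Type III. By hypothesis $uv$ is not bad, so in particular it is not bad of Type II; since a Type II bad edge is precisely an edge of $G[A^*]$ one of whose endpoints has degree different from $2$ in $B^*$, this forces $d_{B^*}(u) = d_{B^*}(v) = 2$. Having established that both endpoints have degree exactly $2$ in $B^*$, I would then use that $uv$ is also not bad of Type III: a Type III bad edge is an edge of $G[A^*]$ whose endpoints both have $B^*$-degree $2$ but with $N_{B^*}(u) \neq N_{B^*}(v)$, so the absence of this condition yields $N_{B^*}(u) = N_{B^*}(v)$. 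Hence for every edge of $P$ the two endpoints share an identical neighbourhood of size $2$ in $B^*$.

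Finally, since $P$ has at least two vertices (so it contains at least one edge) and is connected, I would traverse $P$ vertex by vertex: consecutive vertices share the same size-$2$ neighbourhood in $B^*$ by the previous paragraph, and by transitivity along the path all vertices of $P$ inherit one common neighbourhood of size $2$ in $B^*$, which is exactly the assertion.

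I do not anticipate any real obstacle here; the only point requiring care is the logical ordering of the two steps. One must first deduce that both endpoints have $B^*$-degree exactly $2$ (from the failure of Type II), because the Type III condition is only meaningful for such vertices; applying Type III prematurely would be unjustified.
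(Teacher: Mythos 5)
Your proof is correct and is exactly the argument the paper intends: the paper's own proof simply states that the claim ``is immediate from the definition of a bad edge,'' and your unpacking via the Type~II condition (forcing $d_{B^*}(u)=d_{B^*}(v)=2$), then the Type~III condition (forcing $N_{B^*}(u)=N_{B^*}(v)$), followed by transitivity along the path, is the intended justification spelled out in full.
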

\begin{proof}
   This is immediate from the definiton of a bad edge.
\end{proof}

\begin{claim}
    Let $u, v \in Y$ be two vertices whose corresponding edges in $G_{B^*}$ are at distance greater than $1$. 
    Then any path in $G$ joining $u$ and $v$ either contains some bad edges, or uses at least $6$ edges between $A^{*}$ and $B^{*}$.
\end{claim}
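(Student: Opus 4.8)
The plan is to establish the dichotomy by fixing a path $P$ in $G$ joining $u$ and $v$ that contains \emph{no} bad edges, and proving directly that such a $P$ must use at least six edges between $A^*$ and $B^*$. The first step is structural: since every edge inside $B^*$ is bad of Type I, $P$ uses no edge of $G[B^*]$, so no two consecutive vertices of $P$ lie in $B^*$ and the vertices of $P$ in $B^*$ occur as isolated single-vertex visits. Writing $P$ as alternating $A^*$-blocks and $B^*$-visits, say $I_1, z_1, I_2, z_2, \dots, z_m, I_{m+1}$ with each $z_j \in B^*$, each $z_j$ is entered and left by a cut edge, and distinct $z_j$ give distinct cut edges; hence $P$ uses exactly $2m$ edges between $A^*$ and $B^*$. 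It therefore suffices to prove $m \ge 3$.

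Next I would analyse each block $I_j$ and show that its vertices all lie in $Y$ and share a common neighbourhood $N_j$ of size $2$ in $B^*$, so that $N_j$ is an edge of the multigraph $G_{B^*}$. When $|I_j| \ge 2$, its internal edges avoid Type II and Type III, so every vertex has $B^*$-degree exactly $2$ and, by Claim~\ref{claim_path_in_A}, they share a common $2$-element neighbourhood. When $I_j$ is a singleton interior block it is adjacent on $P$ to two distinct $B^*$-vertices, and avoidance of Type IV forces its $B^*$-degree to be at most $2$, hence exactly $2$. Tracing the cut edges then shows $z_{j-1}, z_j \in N_j$ in each case, while the end blocks pin down $N_1 = \{a_1,a_2\}$ and $N_{m+1} = \{b_1,b_2\}$, the neighbourhoods of $u$ and $v$ (equivalently, the edges of $G_{B^*}$ corresponding to $u$ and $v$).

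I would then translate this into a distance bound in $G_{B^*}$. Because consecutive edges $N_j$ and $N_{j+1}$ both contain the vertex $z_j$, the vertices $z_1, z_2, \dots, z_m$ form a walk of length $m-1$ in $G_{B^*}$; since $z_1 \in N_1 = \{a_1,a_2\}$ and $z_m \in N_{m+1} = \{b_1,b_2\}$, this exhibits a walk of length at most $m-1$ between an endpoint of the edge corresponding to $u$ and an endpoint of the edge corresponding to $v$. Hence these two edges are at distance at most $m-1$ in $G_{B^*}$. As they are assumed to be at distance greater than $1$, i.e.\ at least $2$, we conclude $m - 1 \ge 2$, so $m \ge 3$ and $P$ uses at least $2m \ge 6$ edges between $A^*$ and $B^*$. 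The degenerate cases $m \le 2$ are precisely those in which the two edges would coincide, be incident, or be at distance $1$, each contradicting the hypothesis.

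I expect the main obstacle to be the bookkeeping in the middle step: certifying, for \emph{every} block including the singletons and the two end blocks, that its vertices lie in $Y$ with a common $2$-element neighbourhood forming an edge of $G_{B^*}$, and that each cut edge lands inside that neighbourhood. This is exactly where the definition of $Y$ together with the Type~II and Type~IV conditions do the work. Once every block is certified as an edge of $G_{B^*}$ with the consecutive-incidence property at the shared $z_j$, the concluding distance estimate is immediate.
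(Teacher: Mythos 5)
Your proof is correct and follows essentially the same route as the paper's: decompose the path at its (isolated) visits to $B^*$, apply Claim~\ref{claim_path_in_A} to the $A^*$-segments so that each segment yields a multiedge of $G_{B^*}$, and trace the shared $B^*$-vertices to bound the distance between the edges corresponding to $u$ and $v$. If anything, your write-up is more complete: you handle all values of $m$ uniformly via the walk-length bound and you explicitly treat singleton $A^*$-blocks (where Claim~\ref{claim_path_in_A} does not apply and the Type~IV condition is what forces $B^*$-degree exactly $2$), a point the paper's terser two-case argument ($2$ or $4$ cut edges) glosses over.
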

\begin{proof}
    Suppose $P$ is a path joining $u$ and $v$ which does not use any bad edges. By defintition and using claim \ref{claim1}, all vertices of $V(P)\cap A^{*}$ are in $Y$, it can not have an edge inside $B^{*}$ and it must use $2$ or $4$ edges between $A^{*}$ and $B^{*}$. We may assume $P$ uses $4$ edges as the other case follows from the same argument. Let $P=P_1{e_1}{e_2}P_2{e_3}{e_4}P_3$, where $\{e_1,e_2,e_3,e_4\}$ are edges between $A^{*}$ and $B^{*}$ and $P_1,P_2,P_3$ are paths inside $Y$. From claim \ref{claim1} appplied to $P_1$, $P_2$ and $P_3$ we deduce that the edge of $u$ in $G_{B^{*}}$ is at distance at most $1$ to the edge of $v$ in $G_{B^{*}}$. 
\end{proof}

The proof of Theorem~\ref{thm:PPP} is nearly complete. Indeed, since $G$ is path-pairable, there are edge-disjoint paths joining every pair of $\mathcal{P}$, and hence the pairs in $\mathcal{P}$ contribute to at least $6(|\mathcal{P}| - 1233\varepsilon n)$ edges between $A^{*}$ and $B^{*}$.

Let $P$ be the union of the vertices in $\mathcal{P}$ and let $U = A^{*} \setminus P$.
Suppose first that $|U| < 57\eps n$. It follows that
\[ 
	2|\mP| > (n - 57\eps n) - 57\eps n, 
\]
so $|\mP| > n/2 - 57\eps n$. Then the above pairing contributes at least $6(n/2 - 1290\eps n) = 3n - 7740\eps n$ edges between $A^*$ 
and $B^*$. But this is at least $2n$ whenever $\eps \le 7740^{-1}$ which is guaranteed by our choice of $D$, a contradiction. 
Therefore, we may assume that $|U| \ge 57\eps n$.
By Lemma~\ref{lem_main_ppp} (since $c$ is small enough) there is a pairing of the vertices in $U$ which contributes to at least $2|U| - 16\cdot57\eps n = 2|U| - 912\eps n$ edges between $A^{*}$ and $B^{*}$.
Hence in total the number of edges between $A^*$ and $B^*$ is
\begin{align*}
&\ge 6(|\mathcal{P}| - 1233\varepsilon n) + 2|A^{*}| - 4|\mathcal{P}| - 912\eps n \\ 
&\ge 2|\mP| + 2(n -57\eps n) - 6\cdot1233\eps n - 912\eps n \\
&\ge 2n + n/1000 - 8424\eps n.
\end{align*}
So by our choice of $D$ we get that $8424\eps \leq \frac{1}{1000}$, and so there are at least $2n$ edges between $A^*$ and 
$B^*$, a contradiction to the planarity of $G$. It follows that there must exist a vertex of degree at least $cn$. 

\end{proof}

\section{Final Remarks and Open Problems}

It is worth observing that we only used that our graph did not contain a $K_{3,3}$ minor rather than the full planarity condition. In particular, note that our proof relied heavily on the fact that number of edges in any bipartite planar graph on $n$ vertices is less than $2n$. This constraint on the number of edges also holds for bipartite graphs which contain no $K_{3,3}$ minor, which allows us to carry through the rest of the proof. 

We conjecture the following: 

\begin{conjecture}
For any $t$ there exists a constant $c=c(t)$ such that every path-pairable graph on $n$ vertices without a $K_t$ minor must contain a vertex of degree at least $cn$. 
\end{conjecture}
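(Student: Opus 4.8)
The plan is to run the proof of Theorem~\ref{thm:PPP} essentially verbatim, replacing every invocation of a planar edge bound by its $K_t$-minor-free counterpart. Write $\alpha(t)$ for a constant such that every $K_t$-minor-free graph on $m$ vertices has fewer than $\alpha(t)m$ edges, and $\beta(t)$ for the analogous constant for bipartite $K_t$-minor-free graphs; the existence of such constants, with $\alpha(t),\beta(t)=O(t\sqrt{\log t})$, is the classical Kostochka--Thomason bound on the extremal number of a clique minor. With these in hand, the two facts that drove the planar argument --- that any $X$ induces fewer than $3|X|$ edges, and that the bipartite graph across a cut has fewer than $2n$ edges --- become ``fewer than $\alpha(t)|X|$'' and ``fewer than $\beta(t)n$'', respectively.

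First I would check that the three preparatory lemmas survive. Lemma~\ref{lem:multigraph} is already minor-agnostic, and Corollary~\ref{cor_multiedges} generalises immediately: contracting a good $t$-matching produces a $K_t$ minor, so a $K_t$-minor-free multigraph contains no good $t$-matching, and Lemma~\ref{lem:multigraph} applied with $k=t$ forces Condition~\ref{item_1} or Condition~\ref{item_2} whenever $\varepsilon_1+\varepsilon_2\le 2^{-t}$. Lemma~\ref{lem_deg_2} and Lemma~\ref{lem_main_ppp} go through with $3$ replaced by $\alpha(t)$ throughout; crucially the output of Lemma~\ref{lem_main_ppp} is still $\approx 2|U|$ cut edges, since the coefficient $2$ there is structural (it is $4\cdot\tfrac12$: a non-weak path uses an even number $\ge 4$ of cut edges, and we pair up half of $U$). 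One then repeats the body of the proof of Theorem~\ref{thm:PPP}: peel off the high-degree set $B$, move vertices with more neighbours inside than across the cut, isolate the set $Y$ of degree-$2$ vertices, and build the $K_t$-minor-free multigraph $G_{B^*}$ on $B^*$. Applying the generalised Corollary~\ref{cor_multiedges} with $\varepsilon_1=\varepsilon_2=2^{-t-1}$, Condition~\ref{item_1} already yields a vertex of degree at least $2^{-t-3}n$, so we are done in that case; thus we may assume a positive-density set of pairs of $Y$-vertices whose edges in $G_{B^*}$ are at distance $>1$.

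The main obstacle is the final, quantitative, step. In the planar case the forced cut size is $2n$ from the base pairing plus a small surplus from the distance-$>1$ pairs (each contributing $6$ crossings rather than $4$), and this surplus suffices \emph{precisely because} it only has to beat the bipartite bound $2n$ by any positive amount. For general $t$ the bipartite bound is $\beta(t)n$, which for large $t$ is far above the $\approx 2n$ that the base pairing produces; to reach a contradiction one must force the realizing paths to cross the cut $\Omega(\beta(t))=\Omega(t\sqrt{\log t})$ times on average. The only mechanism available so far is distance in $G_{B^*}$: a pair whose edges are at distance $>d$ forces $\ge 2d+4$ crossings, so it would suffice to pair up a constant fraction of $Y$ into pairs at mutual distance $>d$ for some $d=d(t)\approx\beta(t)$. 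But when $Y$ is large the multigraph $G_{B^*}$ lives on the small vertex set $B^*$ (of size $O(\varepsilon n)$) and is therefore extremely dense, so most pairs of its edges are at distance $1$ and far-apart pairs are scarce; the density guaranteed by Lemma~\ref{lem:multigraph} decays like $2^{-t}$ and degrades further at each attempt to push the distance higher. Breaking this wall is, to my mind, the crux.

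Consequently I would attack the amplification by induction on $t$ rather than by a single pass. The idea is that the derived multigraph $G_{B^*}$ (equivalently, its $K_t$-minor-free underlying simple graph on $B^*$) inherits a path-pairing-type demand from the pairing of $Y$, so that a structural statement for $K_{t-1}$-minor-free multigraphs could be bootstrapped to force additional crossings at the next level; iterating down to the planar base case would, optimistically, accumulate the $\Omega(t\sqrt{\log t})$ crossings required. Making this recursion precise --- in particular, transferring a usable path-pairability property to $G_{B^*}$ and controlling the multiplicative loss at each level so that a positive fraction of pairs survives with many forced crossings --- is the hard part, and is where I expect the real work, and possibly a genuinely new idea beyond the contraction toolkit of this paper, to be needed.
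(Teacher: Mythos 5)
First, be aware that the statement you are proving is stated in the paper as a \emph{conjecture}: the authors offer no proof of it, and in their final remarks they observe only that their argument for Theorem~\ref{thm:PPP} extends to graphs with no $K_{3,3}$ minor --- precisely because the bipartite edge bound $2n$ survives in that setting --- leaving the general $K_t$ case open. So there is no paper proof to compare yours against, and the relevant question is whether your proposal closes the gap. It does not, and you say as much yourself. Your analysis of the obstruction is accurate and matches the authors' implicit one: the preparatory lemmas do port over (Lemma~\ref{lem:multigraph} is minor-agnostic, Corollary~\ref{cor_multiedges} generalises by taking $k=t$, and Lemmas~\ref{lem_deg_2} and~\ref{lem_main_ppp} tolerate replacing the planar constants), but the contradiction in the endgame is structural to the constant $2$. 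The pairing machinery can never force more than $6\cdot\frac{n}{2}=3n$ cut edges --- the base pairing gives $\approx 2n$ and the distance-greater-than-$1$ pairs upgrade $4$ crossings to $6$ --- whereas the bipartite $K_t$-minor-free bound one must beat grows like $\Theta(t\sqrt{\log t})\,n$. To win, one would need a positive fraction of pairs whose edges in $G_{B^*}$ are at distance $\Omega(t\sqrt{\log t})$, and Lemma~\ref{lem:multigraph} only controls the dichotomy at distance $1$ versus greater than $1$, with a density guarantee that already decays like $2^{-t}$.

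The genuine gap, then, is the entire amplification step: your proposed fix --- an induction on $t$ in which $G_{B^*}$ inherits a path-pairability-type demand from the pairing of $Y$ --- is stated only as a hope, with no mechanism for transferring edge-disjointness of paths in $G$ to a usable pairing property of the contracted multigraph $G_{B^*}$, and no control of the losses per level. Indeed, it is not even clear that $G_{B^*}$ satisfies any cut-condition-like property, since the paths realizing a pairing of $Y$ may leave $Y\cup B^*$ entirely or use bad edges. Your proposal should therefore be read as a correct reduction of the conjecture to a single well-identified open difficulty (forcing many cut crossings against a $\beta(t)n$ bipartite bound), not as a proof; the conjecture remains open, exactly as the paper leaves it.
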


Finally, recall that we defined $\Delta_{\min}^p(n)$ to be the minimum of $\Delta(G)$ over all $n$-vertex path-pairable planar graphs $G$. We have shown that $\Delta_{\min}^p(n)$ grows linearly in $n$; however, as mentioned in the Introduction, the constants in the upper and lower bounds are quite far apart.  We close with the following problem.

\begin{problem}
Determine $\Delta^{p}_{\min}(n)$ for sufficently large $n$.
\end{problem}

We do not know if our construction yielding the upper bound of $2n/3$ is optimal, and a significant improvement on our lower bound would be very interesting.

\bibliographystyle{acm}
\bibliography{main.bib}
\end{document}